\newtheorem{theorem}{Theorem}
\newtheorem{lemma}{Lemma}
\newenvironment{proof}{\begin{trivlist}
\item[\hskip\labelsep{\it Proof.}]}{$\hfill\Box$\end{trivlist}}
\newcommand{\abs}[1]{\left\vert#1\right\vert}
\newcommand{\tlambda}{\widetilde{\lambda}}
\newcommand{\naturals}{\mathbb{N}}
\newcommand{\me}{\mathrm{e}}
\newcommand{\SEWT}{\sigma_{\;\textup{EWT}}}
\newcommand{\ALG}{\textup{ALG}\xspace}
\newcommand{\EXP}{\textup{EXP}\xspace}
\newcommand{\NOR}{\textup{NOR}\xspace}
\newcommand{\ABS}{\textup{ABS}\xspace}
\newcommand{\SPT}{\textup{SPT}\xspace}
\newcommand{\PT}{\textup{PT}\xspace}
\newcommand{\QPT}{\textup{QPT}\xspace}
\newcommand{\WT}{\textup{WT}\xspace}
\newcommand{\UWT}{\textup{UWT}\xspace}
\newcommand{\fraku}{\mathfrak{u}}
\newcommand{\calG}{{\mathcal{G}}}
\newcommand{\calH}{{\mathcal{H}}}
\newcommand{\calS}{{\mathcal{S}}}
\newcommand{\C}{{\mathbb{C}}} 
\newcommand{\N}{{\mathbb{N}}} 
\newcommand{\R}{{\mathbb{R}}} 
\definecolor{darkmagenta}{RGB}{139,0,139}
\begin{document}

\title{Exponential tractability of linear tensor product problems}

\author{Fred J. Hickernell\thanks{F.~J.~Hickernell gratefully acknowledges support by the United States National Science Foundation grant DMS-1522687.},
Peter Kritzer\thanks{P.~Kritzer gratefully acknowledges support by the Austrian Science Fund (FWF): Project F5506-N26, which is part
of the Special Research Program ``Quasi-Monte Carlo Methods: Theory and Applications''.}, 
Henryk Wo\'{z}niakowski\thanks{H.~Wo\'{z}niakowski gratefully acknowledges the support of the National Science Centre, Poland, based on the decision 
DEC-2017/25/B/ST1/00945.}}

\maketitle

\begin{abstract}
In this article we consider the approximation of compact   
linear operators defined over tensor product Hilbert   
spaces. Necessary and sufficient conditions on the singular values of the problem 
under which we can or cannot achieve different notions of exponential tractability 
are given in \cite{PPW17}. In this paper, we use the new equivalency 
conditions shown in \cite{KW18} 
to obtain these results in an alternative way. 
As opposed to the algebraic setting, quasi-polynomial tractability is not 
possible for non-trivial cases in the exponential setting. 
\end{abstract}

\section{Introduction and Preliminaries}
\label{sec:intro}

Tractability of multivariate problems is the subject of a considerable number 
of articles and monographs in the field of Information-Based Complexity (IBC).  
For an introduction to IBC, we refer to the book \cite{TWW88}.  For a recent 
overview of the state of the art in tractability studies we refer to the 
trilogy \cite{NW08}--\cite{NW12}. In this article we study tractability in   
the worst case setting for linear tensor product problems and 
for algorithms that use finitely many   
arbitrary continuous linear functionals.    

The information complexity of a compact linear operator   
$S_d:\calH_d \to \calG_d$ is defined as the minimal number, $n(\varepsilon,S_d)$, of such   
linear functionals needed to find an $\varepsilon$-approximation. 
It is natural to ask how the information complexity of a given problem 
depends on both $d$ and $\varepsilon^{-1}$.

In most of the literature on this subject, different notions of tractability are
defined in terms of a relationship between $n(\varepsilon,S_d)$ and some powers of $d$ and
$\max(1,\varepsilon^{-1})$. This is called algebraic (ALG) tractability. 
For a complete overview of a wide range of results 
on algebraic tractability, see \cite{NW08}--\cite{NW12}
On the other hand, a relatively recent stream of work defines different notions
of tractability in terms of a relationship between $n(\varepsilon,S_d)$ and some powers of $d$
and $1+\log \max(1,\varepsilon^{-1})$. Now the complexity of the problem increases only logarithmically
as the error tolerance vanishes. This situation is referred to as exponential
(EXP) tractability, which is the subject of this article. Precise definitions of ALG
and EXP tractabilities are given below.

General compact linear multivariate problems have been studied 
in the recent article \cite{KW18}. Here, we deal with the case of    
tensor product problems for which the singular values of a $d$-variate problem 
are given as products of the singular values of univariate problems. 
Exponential tractability for tensor product problems has been studied in \cite{PPW17}. In this 
paper we re-prove the results of \cite{PPW17} by a different argument via the criteria 
presented in \cite{KW18}.

\bigskip

Consider two Hilbert spaces $\calH_1$ and $\calG_1$ and a 
compact linear solution operator,
$
S_1: \calH_1 \to \calG_1.
$
Let $\N$ denote the set of positive integers,   
and $\N_0 =\N\cup\{0\}$.  For $d\in \N$, let 
$$
\calH_d\,=\,\calH_1\otimes\calH_1\otimes\cdots\otimes\calH_1\quad\mbox{and}\quad
\calG_d\,=\,\calG_1\otimes\calG_1\otimes\cdots\otimes\calG_1
$$
be the $d$-fold tensor products of the spaces $\calH_1$ and $\calG_1$, respectively. 
Furthermore, let $S_d$ be the linear tensor product operator,
$$ 
S_d=S_1\otimes S_1\otimes \cdots \otimes S_1,
$$
on $\calH_d$. 
In this way, obtain a sequence of compact    
linear solution operators    
$$   
\calS\,=\, \{S_d: \calH_d \to \calG_d\}_{d \in \N}.     
$$   
We now consider the problem of 
approximating $\{S_d(f)\}$ for $f$ from the   
unit ball of $\calH_d$ by means of algorithms   
$\{A_{d,n}: \calH_d \to \calG_d\}_{d \in \N,n\in\N_0}$.    
For $n=0$, we set $A_{d,0}:=0$, and for $n\ge 1$, $A_{d,n}(f)$    
depends on $n$ continuous    
linear functionals $L_1(f), L_2(f),\ldots, L_n(f)$, so that   
\begin{equation}\label{eq:algorithm} 
A_{d,n}(f)=\phi_n(L_1(f),L_2(f),\dots,L_n(f))   
\end{equation}   
for some $\phi_n:\C^n\to \calG_d$ or $\phi_n:\R^n\to \calG_d$ 
and $L_j\in \calH_d^*$.
We allow an adaptive choice of $L_1,L_2,\ldots,L_n$ as well as $n$, i.e.,   
$L_j=L_j(\cdot;L_1(f),L_2(f),\dots,L_{j-1}(f))$ and $n$ can be a   
function of the $L_j(f)$,    
see \cite{TWW88} and \cite{NW08} for details.
The error of a given algorithm $A_{d,n}$ is measured in the worst case setting, 
which means that we need to deal with  
$$   
e(A_{d,n})\,=\,\sup_{\substack{f\in\calH_d \\ \|f\|_{\calH_d}\le1}}   
\|S_d(f)-A_{d,n}(f)\|_{\calG_d}.   
$$   
However, to assess the difficulty of the approximation problem, we would not 
only like to study the worst case errors of particular algorithms, but 
consider a more general error measure. To this end, let   
$$   
e(n,S_d)\,=\,\inf_{A_{d,n}}\,e(A_{d,n})   
$$   
denote the $n$th minimal worst case error,    
where the infimum is extended over all admissible algorithms $A_{d,n}$ of the form \eqref{eq:algorithm}. Then    
the information complexity $n(\varepsilon,S_d)$ is   
the minimal number $n$ of continuous linear functionals    
needed to find an algorithm $A_{d,n}$ that approximates   
$S_d$ with error at most $\varepsilon$. More precisely,    
we consider the absolute (\ABS) and normalized (\NOR) error criteria   
in which    
\begin{align*}   
 n(\varepsilon,S_d)&=n_{\ABS}(\varepsilon, S_d)\, =   
\min\{n\,\colon\,e(n,S_d)\le \varepsilon\},\\   
 n(\varepsilon,S_d)&=n_{\NOR}(\varepsilon,S_d)=   
\min\{n\,\colon\,e(n,S_d) \le \varepsilon\,\|S_d\|\}.   
\end{align*}   

It is known from \cite{TWW88} (see also \cite{NW08})   
that the information complexity is fully determined by    
the singular values of $S_d$, which    
are the same as the square roots of the eigenvalues of    
the compact self-adjoint and positive semi-definite   
linear   
operator $W_d=S_d^\ast S_d:\calH_d\rightarrow\calH_d$.    
We denote these eigenvalues by $\lambda_{d,1}, \lambda_{d,2},\ldots$. Then it is known that 
the information complexity can be expressed in terms of the eigenvalues $\lambda_{d,j}$. Indeed, 
\begin{align}   
n_{\ABS}(\varepsilon, S_d)   
&=\min\{n\,\colon\,\lambda_{d,n+1}\le   
\varepsilon^2\},
\label{eq:infcomplabs}\\   
n_{\NOR}(\varepsilon, S_d)&=   
\min\{n\,\colon\,\lambda_{d,n+1}\le \varepsilon^2 \lambda_{d,1}\}. 
\label{eq:infcomplnor}     
\end{align}
Clearly,    
$n_{\ABS}(\varepsilon, S_d)=0$ for $\varepsilon\ge   
\sqrt{\lambda_{d,1}}=\|S_d\|$, and    
$n_{\NOR}(\varepsilon, S_d)=0$ for $\varepsilon\ge1$.   
Therefore for $\ABS$ we can restrict ourselves to $\varepsilon\in(0,\|S_d\|)$,   
whereas for $\NOR$ to $\varepsilon\in(0,1)$.   
Since $\|S_d\|$ can be arbitrarily large, to deal simultaneously    
with $\ABS$ and $\NOR$ we consider $\varepsilon\in(0,\infty)$.    
It is known that $n_{\ABS/\NOR}(\varepsilon,S_d)$ is finite for all   
$\varepsilon>0$ iff $S_d$ is compact, which justifies our assumption   
about the compactness of~$S_d$.   

We now recall that the spaces $\calH_d$ and $\calG_d$ are tensor product
spaces. It is known that the eigenvalues $\lambda_{d,j}$ of $W_d$ are then given as products 
of the eigenvalues $\tlambda_j$ of the operator $W_1=S_1^\ast S_1:\calH_1\rightarrow\calH_1$, i.e.,
\begin{equation}\label{eq:productform}
\lambda_{d,j}=\prod_{\ell=1}^d \tlambda_{j_\ell}.
\end{equation}
Without loss of generality, we assume that the $\tlambda_j$ are ordered, i.e., $\tlambda_1\ge \tlambda_2 \ge \cdots$. 

Although the $\tlambda_j$ are given by \eqref{eq:productform}, the ordering of the $\tlambda_j$ does not easily imply the
ordering of the $\lambda_{d,j}$ since the map $j \in \naturals \mapsto (j_1, \ldots, j_d) \in \naturals^d$
exists but does not have a simple explicit form. 
This makes the tractability analysis challenging.
   
We are ready to define exactly various notions of ALG and EXP tractabilities. To
present them concisely, let
\begin{equation*}
\mathsf{y} \in \{\ABS, \NOR \}, \quad
\mathsf{z} = \begin{cases} \max(1,\varepsilon^{-1}), & \mbox{in the case of ALG}, \\
1+\log\, \max(1,\varepsilon^{-1}), & \mbox{in the case of EXP}.\end{cases}
\end{equation*}
These definitions are as follows.

\bigskip

The problem $\calS$ is \ldots 

\begin{itemize}
 \item strongly polynomially tractable (\SPT) if there are $C, q\ge 0$ such that
$$
 n_{\texttt{y}}(\varepsilon,S_d) \le C \mathsf{z}^q\quad \forall d \in \naturals, \, \varepsilon \in (0,\infty),
$$

\item polynomially tractable (\PT) if there are $C,p,q\ge 0$ such that
$$
 n_{\texttt{y}}(\varepsilon,S_d) \le C d^p \mathsf{z}^q\quad \forall d \in \naturals, \, \varepsilon \in (0,\infty),
$$

\item quasi-polynomially tractable (\QPT) if there are $C,p\ge 0$ such that
$$
 n_{\texttt{y}}(\varepsilon,S_d) \le C\,\exp\bigl(p\,(1+\log\,d)(1+\log\, \mathsf{z})\bigr)
 \quad \forall d \in \naturals, \, \varepsilon \in (0,\infty),
$$
\item $(s,t)$-weakly tractable ($(s,t)$-\WT) if
$$
\lim_{d+\varepsilon^{-1}\to\infty}\   
\frac{\log\,\max(1,n_{\mathsf{y}}(\varepsilon,S_d))}{d^{\,t}+\mathsf{z}^{s}}\,=\,0,
$$

\item uniformly weakly tractable (\UWT) if $(s,t)$-\WT holds for all $s,t>0$.

\end{itemize}

We use the prefix \ALG- with the above tractability notions in the case $\mathsf{z}=\max(1,\varepsilon^{-1})$ and 
\EXP- in the case $\mathsf{z}=1+\log\, \max(1,\varepsilon^{-1})$.

A recent article \cite{KW18} provides  necessary and sufficient conditions 
on the eigenvalues $\lambda_{d,j}$ of $W_d$ for the various tractability 
notions above. For the special case of linear tensor product spaces 
considered here, it is natural 
to ask for conditions on the eigenvalues $\tlambda_j$ of $W_1$ such that we obtain 
the different kinds of exponential tractability. For results on algebraic tractability 
for tensor product spaces, see again \cite{NW08}--\cite{NW12} and the articles cited therein. 
The notion of $(s,t)$-\WT was introduced in \cite{SW15}, and \UWT was introduced in \cite{S13}. 
See also \cite{WW17} and \cite{KW18} for results on $(s,t)$-\WT and \UWT in the algebraic sense.

Finding  
necessary and sufficient conditions on the $\tlambda_j$ for the different kinds of exponential 
tractability turns out to be a technically difficult question. Necessary and sufficient conditions have 
been considered in the paper \cite{PPW17}. Here we re-prove the results in \cite{PPW17}, using 
a completely different technique, namely using criteria that have been shown very recently in the 
paper \cite{KW18}. In some cases, our new technique enables us to obtain the desired results using
shorter and/or less technical arguments than those that were used in \cite{PPW17}. 

\section{Results}
\label{sec:results}
 
In this section we show results on tractability conditions in terms of the eigenvalues of the 
operator $W_1$. 

Note that, if all of the $\tlambda_j$ equal zero, then the operators $S_d$ are all zero, and 
$n_{\ABS/\NOR}(\varepsilon,S_d)=0$ for all $d\ge 1$. Furthermore, 
if only $\tlambda_1>0$ and $\tlambda_2=\tlambda_3=\cdots =0$ (remember that the $\tlambda_j$ are ordered), 
it can be shown that $n_{\ABS/\NOR}(\varepsilon,S_d)\le 1$ for all $d\ge 1$. Hence, the problem is 
interesting only if at least two of the $\tlambda_j$ are positive, which we assume from now 
on. 

Before we state our main result, we state two technical lemmas and a theorem proved elsewhere.  The first lemma is well known.

\begin{lemma} \label{sSumLem}
For any $n \in \N$  and $a_1,a_2,\ldots,a_n \ge 0$ we have:
\begin{itemize}
\item For $s\ge1$,
\begin{equation*}
\frac1{n^{s-1}} (a_1 + \cdots + a_n)^s
\le a_1^s + \cdots + a_n^s \le
(a_1 + \cdots + a_n)^s.
\end{equation*}
\item For $s\le1$
\begin{equation*}
\qquad
\qquad
(a_1 + \cdots + a_n)^s
\le a_1^s + \cdots + a_n^s \le n^{1-s}(a_1 + \cdots + a_n)^s.
\end{equation*}
\end{itemize}
\end{lemma}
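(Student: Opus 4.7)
The plan is to deduce both cases from the convexity properties of the power function $t \mapsto t^s$ on $[0,\infty)$ together with the super/subadditivity inequalities for this function. Nothing deeper should be needed; this is a pair of standard power-mean inequalities.

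First I would handle $s \ge 1$, where $t \mapsto t^s$ is convex. Jensen's inequality applied to the uniform distribution on $\{a_1,\ldots,a_n\}$ gives
\begin{equation*}
\left(\frac{a_1+\cdots+a_n}{n}\right)^{s} \le \frac{a_1^{s}+\cdots+a_n^{s}}{n},
\end{equation*}
which rearranges exactly to the stated lower bound $n^{-(s-1)}(a_1+\cdots+a_n)^s \le a_1^s+\cdots+a_n^s$. For the matching upper bound, one needs superadditivity $(a+b)^s \ge a^s + b^s$ for $a,b\ge 0$ and $s\ge 1$; this follows because, with $b$ fixed, the function $g(a) = (a+b)^s - a^s - b^s$ satisfies $g(0) = 0$ and $g'(a) = s((a+b)^{s-1} - a^{s-1}) \ge 0$. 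A straightforward induction on $n$ then yields $a_1^s + \cdots + a_n^s \le (a_1+\cdots+a_n)^s$.

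For $s \le 1$ the roles are reversed because $t \mapsto t^s$ is concave (and nondecreasing) on $[0,\infty)$. The subadditivity $(a+b)^s \le a^s + b^s$, proved by the analogous monotonicity argument applied to $h(a) = a^s + b^s - (a+b)^s$, combined with induction gives the lower bound $(a_1+\cdots+a_n)^s \le a_1^s + \cdots + a_n^s$. The upper bound in this case is again Jensen's inequality, now in the form
\begin{equation*}
\frac{a_1^{s}+\cdots+a_n^{s}}{n} \le \left(\frac{a_1+\cdots+a_n}{n}\right)^{s},
\end{equation*}
which rearranges to $a_1^s + \cdots + a_n^s \le n^{1-s}(a_1+\cdots+a_n)^s$, as required.

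There is no real obstacle here, and the two cases are symmetric up to swapping the direction of Jensen's inequality. The only small subtlety worth being careful about is the degenerate case $s=0$ in the subadditivity step (where $t^s \equiv 1$ for $t>0$), and the convention that some $a_i$ may vanish; both are handled directly. I would present the two cases in parallel for readability, and keep the proof to a few lines.
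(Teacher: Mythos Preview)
Your proof is correct and entirely standard. The paper does not actually give a proof of this lemma; it simply states that ``the first lemma is well known'' and moves on, so there is nothing to compare against.
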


\begin{lemma} \label{binomBdLem} For all $k, n \in \N$ with $k < n$, it follows that
\begin{equation*}
      \max\left\{\left(\frac nk\right)^k,  \left(\frac n{n-k}\right)^{n-k} \right\}
      \le \binom{n}{k} \le \min\left\{\left(\frac {\me n}k\right)^k,  \left(\frac {\me n}{n-k}\right)^{n-k} \right\}.
\end{equation*}
\end{lemma}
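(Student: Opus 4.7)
The plan is to exploit the symmetry $\binom{n}{k}=\binom{n}{n-k}$, which immediately reduces both the upper and the lower bound in the lemma to a single inequality in each direction: once one proves $(n/k)^k \le \binom{n}{k} \le (\me n/k)^k$, the factors $(n/(n-k))^{n-k}$ and $(\me n/(n-k))^{n-k}$ follow by replacing $k$ with $n-k$, and taking the maximum on the left and the minimum on the right then yields the stated inequalities.

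For the lower bound $\binom{n}{k} \ge (n/k)^k$, I would write
\begin{equation*}
\binom{n}{k} \;=\; \prod_{j=0}^{k-1}\frac{n-j}{k-j}
\end{equation*}
and verify term-by-term that $(n-j)/(k-j)\ge n/k$ for $0\le j\le k-1$. This inequality is equivalent to $k(n-j)\ge n(k-j)$, i.e.\ $(n-k)j \ge 0$, which holds because $n>k$ and $j\ge 0$. Multiplying the $k$ factors gives the claim.

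For the upper bound $\binom{n}{k} \le (\me n/k)^k$, I would start from the trivial estimate
\begin{equation*}
\binom{n}{k} \;=\; \frac{n(n-1)\cdots(n-k+1)}{k!} \;\le\; \frac{n^k}{k!},
\end{equation*}
and then bound $k!$ from below via the Taylor series of the exponential: since $\me^k=\sum_{j\ge 0} k^j/j! \ge k^k/k!$, we obtain $k!\ge (k/\me)^k$. Substituting this gives $\binom{n}{k}\le n^k (\me/k)^k = (\me n/k)^k$.

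The argument is essentially a two-line manipulation in each direction, so I do not anticipate any serious obstacle; the only point that requires a moment's care is the symmetry reduction, and in particular observing that when $k=n/2$ (if $n$ is even) both terms inside the $\max$ and $\min$ agree, so the reduction is consistent. Everything else is elementary.
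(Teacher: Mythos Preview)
Your proof is correct and essentially identical to the paper's own argument: the paper also reduces via the symmetry $\binom{n}{k}=\binom{n}{n-k}$, establishes the lower bound by the same term-by-term comparison $(n-j)/(k-j)\ge n/k$, and obtains the upper bound from $\binom{n}{k}\le n^k/k!$ together with $k^k/k!\le \me^k$. The only cosmetic difference is that you spell out the verification $(n-k)j\ge 0$ explicitly, whereas the paper states the product inequality directly.
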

\begin{proof}
It is easy to see that
                \begin{align*}
                \left(\frac nk\right)^k  = \frac{n}{k}\cdots\frac{n}{k}           
                & \le \frac{n (n-1) \cdots (n - k+1)}{k (k-1) \cdots  1}
                 =\binom{n}{k} = \binom{n}{n-k} \\
                & \le \frac{n^k}{k!} = \left(\frac nk\right)^k \frac{k^k}{k!} \le  \left(\frac {\me n}k\right)^k,
                \end{align*}
and the estimates of the lemma follow.$\hfill\Box$
\end{proof}

\begin{theorem} \cite[Theorem 3]{KW18} \label{thm:general}   
$\calS$ is \EXP-$(s,t)$-\WT-\ABS/\NOR iff   
\begin{equation}\label{eq:EXP-WT}   
\sup_{d\in\N} \SEWT (d,s,t,c)    
 < \infty\quad \forall c>0,   
\end{equation}   
where   
\[   
 \SEWT (d,s,t,c):=\exp (-cd^{\,t})\,\sum_{j=1}^\infty    
\exp \left(-c \left[   
1+   
\log\left(2\,\max\left(1,\frac{{\rm   
        CRI}_d}{\lambda_{d,j}}\right)\right)   
\right]^s \right),   
\]   
where 
$$   
 \mathrm{CRI}_d=\begin{cases}   
                 1 &\mbox{for $\ABS$},\\   
                 \lambda_{d,1} &\mbox{for $\NOR$}.   
                 \end{cases}   
$$  
\end{theorem}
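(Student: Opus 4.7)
The plan is to convert the defining tractability inequality $n_{\mathsf{y}}(\varepsilon,S_d) \le C_\delta \exp(\delta(d^{\,t}+\mathsf{z}^{s}))$ (which, as $\delta$ ranges over $(0,\infty)$, is equivalent to EXP-$(s,t)$-\WT after absorbing the bounded regime of $(d,\varepsilon)$ into the constant) into a tail bound on $\SEWT$, and vice versa. Set
\[
z_j := 1 + \log\max\bigl(1,\sqrt{\mathrm{CRI}_d/\lambda_{d,j}}\bigr),
\]
so that the exponent $1+\log(2\max(1,\mathrm{CRI}_d/\lambda_{d,j}))$ appearing inside $\SEWT$ equals $2z_j+\log 2-1$. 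Since $z_j\ge 1$, this quantity is sandwiched between $z_j$ and $(2+\log 2)z_j$. The crucial observation is that $z_j$ is precisely the value of $\mathsf{z}$ that arises from $\varepsilon=\sqrt{\lambda_{d,j}/\mathrm{CRI}_d}$, the threshold below which $n_{\mathsf{y}}(\varepsilon,S_d)\ge j$.

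For the sufficiency direction, assume $\sup_d \SEWT(d,s,t,c)\le M_c$ for every $c>0$. Fix $\varepsilon,d$, let $n=n_{\mathsf{y}}(\varepsilon,S_d)\ge 1$, and note that for $1\le j\le n$ one has $\lambda_{d,j}>\varepsilon^{2}\mathrm{CRI}_d$, so each of the first $n$ terms of the sum defining $\SEWT$ is at least $\exp(-c(2\mathsf{z}+\log 2-1)^{s})$ with $\mathsf{z}=1+\log\max(1,1/\varepsilon)$. Combining this with the elementary inequality $(2\mathsf{z}+\log 2-1)^{s}\le(2+\log 2)^{s}\mathsf{z}^{s}$ (valid for $\mathsf{z}\ge 1$) yields
\[
\log n \;\le\; \log M_c + c\,d^{\,t} + c\,(2+\log 2)^{s}\mathsf{z}^{s}.
\]
Given any $\eta>0$, picking $c=\eta/\max(1,(2+\log 2)^{s})$ bounds the right-hand side by $\log M_c+\eta(d^{\,t}+\mathsf{z}^{s})$; dividing through and letting $d+\varepsilon^{-1}\to\infty$ gives $\limsup\le\eta$, so EXP-$(s,t)$-\WT follows.

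For the necessity direction, assume EXP-$(s,t)$-\WT and hence $n_{\mathsf{y}}(\varepsilon,S_d)\le C_\delta\exp(\delta(d^{\,t}+\mathsf{z}^{s}))$ uniformly for every $\delta>0$. For each $j$ with $\lambda_{d,j}>0$, letting $\varepsilon\nearrow\sqrt{\lambda_{d,j}/\mathrm{CRI}_d}$ forces $n_{\mathsf{y}}(\varepsilon,S_d)\ge j$ and $\mathsf{z}\to z_j$, so in the limit
\[
z_j^{s} \;\ge\; \delta^{-1}\log(j/C_\delta)-d^{\,t}.
\]
Using $1+\log(2\max(1,\mathrm{CRI}_d/\lambda_{d,j}))\ge z_j$ from the first paragraph and choosing $\delta<c$ (so that $c/\delta>1$), one obtains
\[
\SEWT(d,s,t,c) \;\le\; \exp(-cd^{\,t})\sum_{j\ge 1}\exp(-c z_j^{s}) \;\le\; C_\delta^{c/\delta}\sum_{j\ge 1} j^{-c/\delta} \;=\; C_\delta^{c/\delta}\,\zeta(c/\delta)<\infty,
\]
independent of $d$ (terms with $\lambda_{d,j}=0$ vanish, as the corresponding summand is $\exp(-\infty)=0$). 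The main obstacle I anticipate is the careful bookkeeping of the $O(1)$ shift $2z_j+\log 2-1$ together with the dichotomy $\varepsilon<1$ versus $\varepsilon\ge 1$: both the choice of $c$ (sufficiency) and of $\delta$ (necessity) must be small enough that these constants are dominated either by $\eta(d^{\,t}+\mathsf{z}^{s})$ or by the convergence of a $\zeta$-series, and uniformity in $d$ of $C_\delta$ requires that the excluded bounded region of $(d,\varepsilon)$ meets only finitely many integer dimensions.
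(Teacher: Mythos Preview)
The paper does not prove Theorem~\ref{thm:general}; it is quoted verbatim from \cite[Theorem~3]{KW18} and used as a black box. There is therefore no in-paper argument to compare your attempt against.

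That said, your proposal is a correct and essentially complete proof. The reformulation of \EXP-$(s,t)$-\WT as the family of inequalities $n_{\mathsf{y}}(\varepsilon,S_d)\le C_\delta\exp\bigl(\delta(d^{\,t}+\mathsf{z}^s)\bigr)$ for every $\delta>0$ is standard and justified exactly as you indicate: outside a bounded region of $(d,\varepsilon^{-1})$ the limit condition gives the inequality with $C_\delta=1$, and on the bounded region only finitely many integer dimensions occur, so compactness of each $S_d$ yields a finite uniform bound that can be absorbed into $C_\delta$. Your identification of the summand exponent with $2z_j+\log 2-1$ and the two-sided comparison $z_j\le 2z_j+\log 2-1\le(2+\log 2)z_j$ are correct for $z_j\ge1$. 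In the sufficiency direction the counting argument ``the first $n$ summands are each at least $\exp(-c(2\mathsf{z}+\log 2-1)^s)$'' is valid because $\lambda_{d,j}>\varepsilon^2\,\mathrm{CRI}_d$ for $j\le n$ forces $z_j\le\mathsf{z}$; this yields $\log n\le\log M_c+c\,d^{\,t}+c(2+\log 2)^s\mathsf{z}^s$, and a suitable choice of $c$ finishes. In the necessity direction, approaching $\varepsilon\nearrow\sqrt{\lambda_{d,j}/\mathrm{CRI}_d}$ gives $j\le C_\delta\exp\bigl(\delta(d^{\,t}+z_j^s)\bigr)$ by continuity of $\mathsf{z}$ in $\varepsilon$; inserting this into the sum and choosing $\delta<c$ produces the $d$-independent bound $C_\delta^{c/\delta}\zeta(c/\delta)$, exactly as you wrote. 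The edge cases you flag ($\varepsilon\ge1$, $\lambda_{d,j}=0$, $j<C_\delta$) are all handled by your remarks.
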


We now state and prove the main result of this article.
\begin{theorem}\label{thm:main}
Let
$$
\tlambda_1\ge\tlambda_2>0.
$$
Consider the conditions
\begin{equation}\label{rateA}
\lim_{n\to\infty}\ \frac{\log\,\tlambda_{n}^{-1}}{(\log\,n)^{1/\min (s,t)}}=\infty,
\end{equation}
\begin{equation}\label{rateB}
\lim_{n\to\infty}\ \frac{\log\,\tlambda_{n}^{-1}}{(\log\,n)^{1/s}}=\infty,
\end{equation}
\begin{equation}\label{rateC}
\lim_{n\to\infty}\ \frac{\log\,\tlambda_{n}^{-1}}{(\log\,n)^{1/\eta}}=\infty, 
\end{equation}
where $\eta$ is given below.

\noindent\EXP-$(s,t)$-\WT-\ABS holds iff one of the following conditions is true:
\begin{itemize}
\item{\textbf{(A.1):}}\, $t>1$, $s>1$, $\tlambda_1>1$, and \eqref{rateA} holds\ \ \mbox{or}
\item{\textbf{(A.2):}}\, $t>1$, $s\ge 1$, $\tlambda_1\le 1$, and \eqref{rateB} holds\ \ \mbox{or}
\item{\textbf{(A.3):}}\, $t>1$, $s < 1$, and \eqref{rateC} holds with $\eta=s(t-1)/(t-s)$\ \ \mbox{or} 
\item{\textbf{(A.4):}}\, $t\le 1$, $s>1$, $\tlambda_1\le1$, $\tlambda_2<1$, and \eqref{rateB} holds.
\end{itemize}
\EXP-$(s,t)$-\WT-\NOR holds iff one of the following conditions is true:
\begin{itemize}
\item{\textbf{(N.1):}}\, $t>1$, $s\ge 1$, and \eqref{rateB} holds\ \ \mbox{or}
\item{\textbf{(N.2):}}\, $t>1$, $s < 1$, and \eqref{rateC} holds with $\eta=s(t-1)/(t-s)$\ \ \mbox{or} 
\item{\textbf{(N.3):}}\, $t\le 1$, $s>1$, $\tlambda_1>\tlambda_2$, and \eqref{rateB} holds.
\end{itemize}
Furthermore, \EXP-\UWT, \EXP-\QPT, \EXP-\PT, and \EXP-\SPT do not hold under any conditions
on $\tlambda_j$, i.e., even for $\tlambda_3\,=\,\tlambda_4\,=\, \cdots\, =\, 0$.

\end{theorem}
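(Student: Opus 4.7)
The overall strategy is to invoke Theorem~\ref{thm:general} and analyze $\SEWT(d,s,t,c)$ by re-indexing via the tensor product identity \eqref{eq:productform}: a sum over $j\in\N$ becomes a sum over multi-indices $(j_1,\dots,j_d)\in\N^d$, and the bracket inside the exponential becomes a function of $\sum_{\ell=1}^d \log\tlambda_{j_\ell}^{-1}$. The \ABS\ and \NOR\ cases are treated in parallel but with $\mathrm{CRI}_d=1$ and $\mathrm{CRI}_d=\tlambda_1^d$ respectively.

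I would begin with \NOR. Since $\tlambda_1\ge\tlambda_{j_\ell}$, the maximum simplifies and the bracket equals $1+\log\,2+\sum_\ell \log(\tlambda_1/\tlambda_{j_\ell})$. In the model case $s=1$ the exponential factorizes into a $d$-th power,
$$
\SEWT(d,1,t,c)=(2\me)^{-c}\exp(-cd^{\,t})\left[\sum_{j\ge 1}(\tlambda_j/\tlambda_1)^{c}\right]^{d},
$$
and convergence of the bracketed series for every $c>0$ is equivalent to \eqref{rateB} with $s=1$, while the prefactor keeps the supremum finite iff $t>1$; this gives (N.1) for $s=1$. For $s>1$ and $s<1$ I would apply Lemma~\ref{sSumLem} to sandwich $[\cdots]^s$ between sums of individual $s$-powers, reducing again to a factorized bound; the exponent $\eta=s(t-1)/(t-s)$ in (N.2) arises by optimizing the trade-off between the prefactor $\exp(-cd^{\,t})$ and the $d^{1-s}$ loss from Lemma~\ref{sSumLem}. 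Case (N.3), where $t\le 1$ and $s>1$, is delicate because the prefactor no longer beats exponential growth in $d$; here one groups multi-indices by the number $k$ of coordinates with $j_\ell\ne 1$, bounds $\binom{d}{k}$ via Lemma~\ref{binomBdLem}, and uses the strict gap $\tlambda_1>\tlambda_2$ to guarantee a positive logarithmic cost per deviation, forcing rapid decay in $d$.

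The \ABS\ analysis parallels \NOR\ but splits according to whether $\tlambda_1\le 1$ or $\tlambda_1>1$. When $\tlambda_1\le 1$, all $\lambda_{d,j}\le 1$ and the maximum equals $1/\lambda_{d,j}$, so the \NOR\ arguments transfer almost verbatim to yield (A.2) and (A.4). When $\tlambda_1>1$ the max equals $1$ for every multi-index with sufficiently many coordinates equal to $1$; these indices form a sub-block whose size grows exponentially in $d$ and whose per-term weight is a constant $\exp(-c(1+\log\,2)^s)$, so the prefactor $\exp(-cd^{\,t})$ must swallow exponential growth in $d$ (forcing $t>1$), while the ``active'' part of the sum contributes the modified exponent $1/\min(s,t)$ of \eqref{rateA} for case (A.1); case (A.3) is handled by the same $s<1$ interpolation as (N.2). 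The main obstacle I anticipate is the \emph{necessity} direction in (A.3)/(N.2): one has to construct, when \eqref{rateC} fails, a combinatorially explicit subsequence of multi-indices witnessing $\sup_d\SEWT(d,s,t,c)=\infty$ for some $c>0$, and the construction must be calibrated so that the chosen value of $\eta$ is tight.

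Finally, to rule out \EXP-\UWT, \EXP-\QPT, \EXP-\PT, and \EXP-\SPT under \emph{any} choice of $\tlambda_j$, I would analyze the minimal non-trivial example $\tlambda_1\ge\tlambda_2>0$, $\tlambda_3=\tlambda_4=\cdots=0$. Then $\lambda_{d,j}$ takes only the values $\tlambda_1^{d-k}\tlambda_2^{k}$ with multiplicity $\binom{d}{k}$, and $n_{\texttt{y}}(\varepsilon,S_d)$ is a partial binomial sum whose size is estimated via Lemma~\ref{binomBdLem}. Coupling $d$ and $\varepsilon^{-1}$ along an appropriate sequence (e.g.\ $d\to\infty$ with $1/\varepsilon$ tending to infinity more slowly) produces $\log\,n_{\texttt{y}}(\varepsilon,S_d)/(d^{\,t}+\mathsf{z}^{s})\not\to 0$ for suitable $s,t\le 1$, so \EXP-\UWT\ fails; the chain \EXP-\SPT$\Rightarrow$\EXP-\PT$\Rightarrow$\EXP-\QPT$\Rightarrow$\EXP-\UWT\ then disposes of the other three notions.
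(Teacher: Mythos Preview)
Your overall plan coincides with the paper's: both start from Theorem~\ref{thm:general}, rewrite $\SEWT$ as a $d$-fold sum via \eqref{eq:productform}, and rely on Lemmas~\ref{sSumLem} and~\ref{binomBdLem} throughout. Three specific points are worth comparing.

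\textbf{Sufficiency of (A.1).} The paper does \emph{not} argue inside $\SEWT$ when $\tlambda_1>1$ as you propose. Instead it switches back to the information complexity, sets $\beta_j=\tlambda_j/\tlambda_1\le 1$, uses the identity $n_{\ABS}^{(\boldsymbol{\lambda})}(\varepsilon,S_d)=n_{\ABS}^{(\boldsymbol{\beta})}(\varepsilon/\tlambda_1^{d/2},S_d)$, and absorbs the shift $d\log\tlambda_1^{1/2}$ into $d^t$ (directly when $s\le t$, and by monotonicity in $s$ when $s>t$), thereby reducing to case (A.2). Your phrase ``the active part of the sum contributes the modified exponent $1/\min(s,t)$'' does not by itself yield a proof, and a direct $\SEWT$-argument here is awkward because the $\max(1,\cdot)$ collapses on a large, irregularly shaped set of multi-indices; you should expect to need the $\boldsymbol{\beta}$-reduction.

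\textbf{Sufficiency of (A.4)/(N.3).} The paper's decomposition is finer than yours: it first fixes a threshold $m$ from \eqref{rateB}, splits each coordinate according to $j_\ell\le m$ versus $j_\ell>m$, and only then, inside the $j_\ell\le m$ block, counts how many coordinates equal~$1$. Your simpler split by $\{j_\ell=1\}$ versus $\{j_\ell\ge 2\}$ can be made to work, but note that you must invoke \emph{both} lower bounds $[\sum_\ell a_\ell]^s\ge k^s[\log\tlambda_2^{-1}]^s$ and $[\sum_\ell a_\ell]^s\ge\sum_\ell a_\ell^s$ simultaneously (for instance by allotting $c/2$ to each), since the first alone leaves the inner $j$-sums divergent and the second alone yields only $(1+B_c)^d$, which is not killed by $\exp(-cd^{\,t})$ when $t\le 1$.

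\textbf{Necessity of \eqref{rateA} and \eqref{rateC}.} You correctly flag this as the main obstacle; the paper does not supply a self-contained argument either, but cites Lemma~1 of \cite{PPW17}. The necessity of \eqref{rateB} is handled by the elementary $d=1$ argument. For the final clause on \EXP-\UWT/\QPT/\PT/\SPT, the paper simply observes that \EXP-$(s,t)$-\WT already fails whenever $s\le 1$ and $t\le 1$ (Cases~I--III of the necessity part), so \EXP-\UWT and hence all stronger notions fail; your route via the two-eigenvalue example reaches the same conclusion.
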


\begin{proof}

We know from Theorem \ref{thm:general} that \EXP-$(s,t)$-\WT holds iff

        \begin{equation} \label{EXPWTcond}
        \sup_{d \in \naturals}  \SEWT(d,s,t ,c) < \infty \quad \forall c>0,\\
        \end{equation}
        where
        \begin{align}
        \nonumber
        \lefteqn{\SEWT(d,s,t,c )} \\
        \label{SEWTSingSum}
         & :=  \sum_{j=1}^\infty \exp\left( -c \left \{d^t + \left [ \log(2\me) + 
         \log \left(\max\left(1, \mathrm{CRI}_d/\lambda_{d,j} \right) \right) \right]^s \right \}\right) \\
         \label{SEWTmultisum}
        &=  \sum_{j_1=1}^\infty \cdots   \sum_{j_d=1}^\infty   \exp\left( -c \left \{ d^t + 
        \left [ \log \left(2 \me \max\left(1, \prod_{\ell = 1}^d \mathrm{CRI}/\tlambda_{j_\ell} \right) \right) \right]^s  \right\} \right),
        \end{align}
where
\[
 \mathrm{CRI}_d=\begin{cases}
               1 & \mbox{for \ABS},\\
               \lambda_{d,1} &\mbox{for \NOR},
              \end{cases}
 \quad\quad \mbox{and}\quad \quad             
 \mathrm{CRI}=\begin{cases}
               1 & \mbox{for \ABS},\\
               \tlambda_1 &\mbox{for \NOR}.
              \end{cases}              
\]

We first show the necessity of the conditions on the eigenvalues $\tlambda_1$ and $\tlambda_2$ 
for ABS and NOR, 
and then the necessity of the conditions \eqref{rateA}, \eqref{rateB}, or \eqref{rateC}, 
depending on the different cases. Then, we show the sufficient conditions for all the cases (A.1)--(A.4) and (N.1)--(N.3). 

\bigskip

\noindent NECESSARY CONDITIONS:

\bigskip

\paragraph{\textbf{Case I:} $t\le 1 \ \&  \ \tlambda_1 > 1 \implies$ NO \EXP-$(s,t)$-\WT-\ABS} \label{Case111f}

        Choose the smallest non-negative $r$ such that $\tlambda_2 \ge
        \tlambda_1^{-r}$, and for every $d > r+2$, let $k = \lfloor
        d/(r+2) \rfloor$.
        Then it follows that
        \begin{equation*}
        k \le \frac{d}{r+2}, \qquad d - k - kr \ge d\left[1 -\frac{r+1}{r+2} \right] = \frac{d}{r+2}.
        \end{equation*}
        Focusing on just these eigenvalues of the form
        \begin{equation*}
        \lambda_{d,j} = \tlambda_1^{d-k}  \tlambda_2^{k} =  \tlambda_1^{d-k-kr}  \tlambda_1^{kr}\tlambda_2^{k} \ge
        \tlambda_1^{d-k-kr}  \tlambda_1^{kr} \tlambda_1^{-kr} =\, \tlambda_1^{d-k-kr} \ge \tlambda_1^{d/(r+2)} \,>\, 1,
        \end{equation*}
        $\SEWT(d,s,t,c )$ has the following lower
        bound via \eqref{SEWTSingSum} and Lemma \ref{binomBdLem}.
        \begin{align*}
        \nonumber
        \SEWT(d,s,t,c )
        & \ge \binom{d}{k} \exp\left( -c \left \{d^t + \left [ \log(2\me) + \log \left(\max\left(1, \lambda_{d,j}^{-1} \right) \right) \right]^s \right \}\right) \\
        & \ge \left(\frac{d}{k}\right)^k \exp\left( -c \left \{d^t + \left [ \log(2\me) \right]^s \right \}\right) \\
        & \ge \left(r+2\right)^{d/(r+2) - 1} \exp\left( -c \left \{d^t + \left [ \log(2\me) \right]^s \right \}\right) \\
        & = \frac{\bigl[(r+2)^{1/(r+2)}\bigr]^d}{r+2} \exp\left( -c \left \{d^t + \left [ \log(2\me) \right]^s \right \}\right).
        \end{align*}
        Since $(r+2)^{1/(r+2)} > 1$ and $t \le 1$, then $\SEWT(d,s,t,c ) \to \infty$ for small $c$ as $d \to \infty$, regardless of the value of $s$.
        Hence, we do not have \EXP-$(s,t)$-\WT-\ABS.

\paragraph{\textbf{Case II:} $t\le 1 \ \&  \ \tlambda_1\ge\tlambda_2\ge1 \implies$ NO \EXP-$(s,t)$-\WT-\ABS }
 We have $2^d$ eigenvalues no smaller than $1$. Therefore
        $$
        \SEWT(d,s,t,c )\ge 2^d\exp\left(-c(d^t+[\log(2e)]^s)\right)
        \to \infty
        $$
        for small $c$ independently of $s$.
        Hence, we do not have \EXP-$(s,t)$-\WT-\ABS.

\paragraph{\textbf{Case III:} $t\le 1 \ \&  \ s \le 1 \implies$ NO \EXP-$(s,t)$-\WT-\ABS} \label{Case111e} 

We have $2^d$ eigenvalues no smaller than $\tlambda_2^d$.
We then have
\begin{align*}
      \SEWT(d,s,t,c) & \ge 2^d
            \exp\left(-c\left(d^t +
            \left[\log\left(2\me \max(1,\tlambda_2^{-d})
            \right)\right]^s\right)\right)\\
      & = 
      \exp\left(d \log \, 2 - cd^t -c\left[\log (2\me) +d\log \, \max(1,\tlambda_2^{-1})\right]^s\right).
\end{align*}
Since $s,t\le 1$ and since $c$ can be arbitrarily small,
we see that this latter term is not bounded for $d\rightarrow\infty$.
Hence, we do not have \EXP-$(s,t)$-\WT-\ABS.

\vskip 1pc
        
{}From the analysis of all these cases, we see that
\EXP-$(s,t)$-\WT-\ABS may only hold when $t>1$, or
when $t\le 1<s$, 
$\tlambda_1\le1$, and $\tlambda_2<1$. This completes the proof of the necessary 
conditions on $\tlambda_1$ and $\tlambda_2$ for \ABS.

\vskip 1pc
We turn to the necessary conditions on $\tlambda_1$ and $\tlambda_2$ for \NOR. This corresponds to considering the ratios
$\lambda_{d,1}/\lambda_{d,j}$ which are at least $1$.
We know that \EXP-$(s,t)$-\WT-\NOR holds iff
$\sup_{d \in \naturals}  \SEWT(d,s,t ,c) < \infty$ for all $c>0$,
where
\begin{eqnarray*}
\SEWT(d,s,t,c )&=&
\sum_{j=1}^\infty \exp\left( -c \left \{d^t +
\left [ \log(2\me) + \log
  \left(\frac{\lambda_{d,1}}{\lambda_{d,j}}\right)
\right) \right]^s\right) \\
&=&  \sum_{j_1=1}^\infty \cdots
\sum_{j_d=1}^\infty  \exp\left( -c \left\{ d^t +
\left [ \log \left(2 \me \prod_{\ell = 1}^d
    \frac{\tlambda_1}{\tlambda_{j_\ell}}
 \right) \right]^s \right\}\right).
 \end{eqnarray*}
Hence, it is the same as \ABS if we assume that $\tlambda_1=1$.
Using the previous results on necessary conditions for \ABS with $\tlambda_1=1$ we obtain the results
for the parameters $s$, $t$, $\tlambda_1$, and $\tlambda_2$ for \NOR. 

\bigskip

Next, we show the necessity of the conditions \eqref{rateA}, \eqref{rateB}, or \eqref{rateC}, 
depending on the different cases.

\paragraph{\textbf{Necessity of \eqref{rateA}:}}

The necessity of \eqref{rateA} for the corresponding subcases follows from Items L1 and L2 of Lemma 1 in \cite{PPW17}.
We remark that these are only based on general definitions, and do not require the technical results used 
in the proof of the main theorem of \cite{PPW17}.

\paragraph{\textbf{Necessity of \eqref{rateB}:}}

Assume first that EXP-$(s,t)$-WT-ABS/NOR holds and that the parameters $t,s, \tlambda_1$, and $\tlambda_2$  
are as in Case (A.2), (A.4), (N.1), or (N.3), respectively. We prove
that \eqref{rateB} holds.
Take $d=1$. Then we know that
$$
\lim_{\varepsilon\to0} \frac{\log\ \max(1,n_{\rm
    ABS/NOR}(\varepsilon,S_1))}
{(\log \varepsilon^{-1})^s}=0.
$$
This means that for any (small) positive $\beta$ there is a positive
$\varepsilon_\beta$ such that
$$
\log\,\max\left(1,n_{\rm ABS/NOR}(\varepsilon,S_1)\right)\le
\beta(\log\,\varepsilon^{-1})^s\ \ \
\mbox{for all $\varepsilon\le \varepsilon_\beta$},
$$
and equivalently
$$
\varepsilon^2\le \exp\left(-2/\beta^{1/s}\left(\log\,\max(1,n_{\rm
    ABS/NOR}(\varepsilon,S_1))\right)^{1/s}\right)\ \ \
\mbox{for all $\varepsilon\le \varepsilon_\beta$}.
$$
Let $n=n_{\rm ABS/NOR}(\varepsilon,S_1)$. Since $\tlambda_{n+1}\le
\varepsilon^2{\rm CRI}$,
with ${\rm CRI}=1$ for ABS and ${\rm CRI}=\tlambda_1$ for NOR,
we obtain for $n\ge\max(1,n_{\rm ABS/NOR}(\varepsilon_{\beta},S_1))$,
$$
\log \tlambda_{n+1}^{-1} \ge
\frac2{\beta^{1/s}}\,\left(\log\,n\right)^{1/s}
+ \log\,{\rm CRI}^{-1}.
$$
Since $2/\beta^{1/s}$ can be arbitrarily large, this  yields \eqref{rateA}.

\paragraph{\textbf{Necessity of \eqref{rateC}:}}

The necessity of \eqref{rateC} for the corresponding subcases follows from Items L1 and L2 of Lemma 1 in \cite{PPW17}.
We remark that these are only based on general definitions, and do not require the technical results used 
in the proof of the main theorem of \cite{PPW17}.

\bigskip

\bigskip

\noindent SUFFICIENT CONDITIONS:

\bigskip

For technical reasons, we begin the proof by showing Case (A.2).

\paragraph{\textbf{(A.2):} $t>1, s\ge 1$, $\tlambda_1\le 1$ \& \eqref{rateB} $\implies $ \EXP-$(s,t)$-\WT-\ABS}

Due to the assumption that $\tlambda_1\le 1$, it is clear that $\tlambda_j^{-1}\ge 1$ for all $j$.

We then have
\begin{eqnarray*}
 \sigma_{\rm EWT} (d,s,t,c)&=& \exp(-c\, d^t)\,
 \sum_{j_1=1}^\infty \cdots \sum_{j_d=1}^\infty
 \exp\left(-c\left[\log (2\mathrm{e})+\log\left(\prod_{\ell=1}^d \frac{1}{\tlambda_{j_\ell}}\right)\right]^s\right)\\
 &\le&
 \exp(-c\, d^t)\,
 \sum_{j_1=1}^\infty \cdots \sum_{j_d=1}^\infty
 \exp\left(-c\left[\log\left(\prod_{\ell=1}^d \frac{1}{\tlambda_{j_\ell}}\right)\right]^s\right)\\
  &=&
 \exp(-c\, d^t)\,
 \sum_{j_1=1}^\infty \cdots \sum_{j_d=1}^\infty
 \exp\left(-c\left[ \sum_{\ell=1}^d \log \left(\frac{1}{\tlambda_{j_\ell}}\right)\right]^s\right)\\
  &\le&
 \exp(-c\, d^t)\,
 \sum_{j_1=1}^\infty \cdots \sum_{j_d=1}^\infty
 \exp\left(-c \sum_{\ell=1}^d \left[\log \left(\frac{1}{\tlambda_{j_\ell}}\right)\right]^s\right),
\end{eqnarray*}
where we used $s\ge1$ and Lemma \ref{sSumLem} in the last step. This yields
\[
 \sigma_{\rm EWT} (d,s,t,c)\le
 \exp(-c\, d^t)\,
 \left(\sum_{j=1}^\infty \exp\left(-c\left[\log \left(1 /\tlambda_j\right)\right]^s\right)\right)^d\, .
\]
Since Condition \eqref{rateB} holds, we know that 
\[
 \log \left(1 /\tlambda_j\right)= h_j (\log (j+1))^{1/s},
\]
where $(h_j)_{j\ge 1}$ is a sequence with $\lim_{j\rightarrow\infty} h_j =\infty$, i.e., 
\begin{eqnarray*}
 \sigma_{\rm EWT} (d,s,t,c)&\le&
 \exp(-c\, d^t)\,
 \left(\sum_{j=1}^\infty \exp\left(-c\left[h_j (\log (j+1))^{1/s}\right]^s\right)\right)^d\\
 &=&\exp(-c\, d^t)\,
 \left(\sum_{j=1}^\infty \exp\left(-c\, h_j^s\, \log (j+1)\right)\right)^d\\
&=& \exp(-c\, d^t)\,
 \left(\sum_{j=1}^\infty \exp\left(\log \left(1/(j+1)^{\,c\, h_j^s}\right)\right)\right)^d\\
 &=& \exp(-c\, d^t)\,
 \left(\sum_{j=1}^\infty \left(\frac{1}{j+1}\right)^{c\, h_j^s}\right)^d\\
&=& \exp(-c\, d^t)\, A_c^d,
\end{eqnarray*}
where $A_c=\sum_{j=1}^\infty \left(\frac{1}{j+1}\right)^{c\, h_j^s}$ is 
well defined and independent of $d$ since $c\, h_j^s$ is greater than one 
for sufficiently large $j$, and the series is convergent. Hence,
\[
 \SEWT (d,s,t,c)\le \exp(-c\, d^t)\, \exp \left(d \log A_c\right).
\]
As $t>1$, we obtain EXP-$(s,t)$-WT-NOR.

\bigskip

We now show Case (A.1), and the other cases in the same order as they are stated in the theorem.

\bigskip

\paragraph{\textbf{(A.1):} $t>1, s> 1$, $\tlambda_1 >  1$ \& \eqref{rateA} $\implies $ \EXP-$(s,t)$-\WT-\ABS}

\paragraph{\textbf{Subcase (A.1.1):} $s\le t$}

We define $\beta_j:=\tlambda_j / \tlambda_1$ for $j\ge 1$, and consider the information complexity with
respect to $\beta_j$ instead of $\tlambda_j$. We denote the information complexity with respect to 
the sequence $\boldsymbol{\beta}=(\beta_j)_{j\ge 1}$ by $n_{\ABS}^{(\boldsymbol{\beta})}$, and that with respect to 
the sequence $\boldsymbol{\lambda}=(\tlambda_j)_{j\ge 1}$ by $n_{\ABS}^{(\boldsymbol{\lambda})}$. Then it is 
straightforward to see that 
\[
n_{\ABS}^{(\boldsymbol{\lambda})} (\varepsilon,S_d)=n_{\ABS}^{(\boldsymbol{\beta})}(\varepsilon /\tlambda_1^{d/2}, S_d)
\]
Since $s\le t$, we have, by Lemma \ref{sSumLem},
\begin{eqnarray*}
d^t+\left(d\log (\tlambda_1^{1/2}) + \log (1/\varepsilon)\right)^s &\le& d^t +  2^{s-1}\,d^s\, (\log (1/\varepsilon))^s +
2^{s-1}\,(\log (1/\varepsilon))^s\\
&\le & C_s (d^t + \left(\log (1/\varepsilon))^s\right)
\end{eqnarray*}
for some positive constant $C_s$ depending on $s$, but not on $d$ or $\varepsilon$. 
This implies 
\begin{eqnarray*}
 \frac{\log\, n_{\ABS}^{(\boldsymbol{\lambda})} (\varepsilon,S_d)}{d^t + \left(1+\log\, \max(1,\varepsilon^{-1})\right)^s}
 \le 
 \frac{\log\, n_{\ABS}^{(\boldsymbol{\beta})}(\varepsilon /\tlambda_1^{d/2}, S_d)}
 {\widetilde{C}_s\left(d^t + \left(1+\log\, \max(1,\varepsilon^{-1}\, \tlambda_1^{d/2})\right)^s\right)}
\end{eqnarray*}
for some positive constant $\widetilde{C}_s$. This means that 
\EXP-$(s,t)$-\WT-\ABS holds with respect to $\boldsymbol{\lambda}$ if it holds with respect 
to $\boldsymbol{\beta}$. However, as $\beta_j\le 1$ for all $j\ge 1$, and since in this subcase $\min (s,t)=s$, the result 
follows from case (A.2) above.

\paragraph{\textbf{Subcase (A.1.2):} $s>t$}

Assume that \eqref{rateA} holds with $\min (s,t) =t$. We need to show that 
\[
 \lim_{d+\varepsilon^{-1}\rightarrow\infty}\ \frac{\log\, n_{\ABS}^{(\boldsymbol{\lambda})} (\varepsilon,S_d)}{d^t + \left(1+\log\, \max(1,\varepsilon^{-1})\right)^s}=0.
\]
However, note that 
\[
 \frac{\log\, n_{\ABS}^{(\boldsymbol{\lambda})} (\varepsilon,S_d)}{d^t + \left(1+\log\, \max(1,\varepsilon^{-1})\right)^s} \le 
 \frac{\log\, n_{\ABS}^{(\boldsymbol{\lambda})} (\varepsilon,S_d)}{d^t + \left(1+\log\, \max(1,\varepsilon^{-1})\right)^t},
\]
and that 
\[
 \lim_{d+\varepsilon^{-1}\rightarrow\infty}\ \frac{\log\, n_{\ABS}^{(\boldsymbol{\lambda})} (\varepsilon,S_d)}{d^t + \left(1+\log\, \max(1,\varepsilon^{-1})\right)^t}=0
\]
by Case (A.1.1). This shows the result. 

\bigskip

\paragraph{\textbf{(A.3):} $t>1$, $s< 1$, \& \eqref{rateC} with $\eta=s(t-1)/(t-s)$ $\implies $ \EXP-$(s,t)$-\WT-\ABS}

\paragraph{\textbf{Subcase (A.3.1):} $\tlambda_1\le 1$}

We study the expression
\[
 \SEWT (d,s,t,c)\le 
 \exp(-c\, d^t)\,
 \sum_{j_1=1}^\infty \cdots \sum_{j_d=1}^\infty
 \exp\left(-c\left[\sum_{\ell=1}^d \log\left( 1/\tlambda_{j_\ell}\right)\right]^s\right).
\]
Note that the definition of $\eta$ together with $s<1$ implies that $\eta< s< 1$, and $1/\eta > 1/s > 1$. Note furthermore that 
Condition \eqref{rateC} implies
\begin{equation}\label{eq:sequencehj}
 \log \left(1 /\tlambda_j\right)= h_j (\log (j+1))^{1/\eta} =h_j\, (\log (j+1))^{1/\eta-1/s}\, (\log (j+1))^{1/s},
\end{equation}
where $(h_j)_{j\ge 1}$ is a sequence with $\lim_{j\rightarrow\infty} h_j =\infty$.

Using \eqref{eq:sequencehj} and the second item in Lemma \ref{sSumLem}, we obtain
\begin{eqnarray*}
 \left[\sum_{\ell=1}^d \log\left( 1/\tlambda_{j_\ell}\right)\right]^s 
 &=& \left[\sum_{\ell=1}^d h_{j_\ell}\, (\log (j_\ell+1))^{1/\eta-1/s}\, (\log (j_\ell+1))^{1/s}\right]^s \\
 &\ge & d^{s-1}\,\sum_{\ell=1}^d (\log (j_\ell +1))^{(s-\eta)/\eta}\, h_{j_\ell}^s\, \log (j_\ell +1).\\ 
\end{eqnarray*}

Consequently,
\begin{eqnarray*}
 \SEWT (d,s,t,c)&\le& 
 \exp(-c\, d^t)\\
 &&\times \sum_{j_1=1}^\infty \cdots \sum_{j_d=1}^\infty
 \exp\left(-c\, d^{s-1}\,\sum_{\ell=1}^d (\log (j_\ell +1))^{(s-\eta)/\eta}\, h_{j_\ell}^s\, \log (j_\ell +1) \right)\\
&=& \exp(-c\, d^t)\, \left(\sum_{j=1}^\infty \exp\left(-c\, d^{s-1}\,(\log (j+1))^{(s-\eta)/\eta}\, h_j^s\, \log (j+1)\right)\right)^d. 
\end{eqnarray*}
Note that 
\[
 d^{s-1}\,(\log (j+1))^{(s-\eta)/\eta}\ge (c/2)^{\eta / (s-\eta)}
\]
if and only if 
\[
 j\ge \left\lceil\exp\left((c/2)\,d^{(1-s) \eta /(s-\eta)}\right)-1\right\rceil=:J_0=J_0 (c,d,s,\eta).
\]
This implies
\[
 \SEWT (d,s,t,c)\le
\exp(-c\, d^t)\, \left(J_0+\sum_{j=J_0}^\infty \exp\left(-c^{s/(s-\eta)}2^{s/(\eta-s)}\, h_j^s\, \log (j+1)\right)\right)^d.
\]

In the same way as in case (A.2), we conclude that there exists a positive constant $A_c$ such that 
\begin{eqnarray*}
 \SEWT (d,s,t,c)&\le& \exp(-c\, d^t)\, (J_0 + A_c)^d\\
 &=&\exp(-c\, d^t)\, \left(\exp\left((c/2)\,d^{(1-s) \eta /(s-\eta)}\right) + A_c\right)^d .
\end{eqnarray*}
Since $A_c$ is independent of $d$, for sufficiently large $d$, 
\begin{eqnarray*}
\SEWT (d,s,t,c)&\le&\exp(-c\, d^t)\, \left(2\exp\left((c/2)\,d^{(1-s) \eta /(s-\eta)}\right)\right)^d\\
&=&\exp(-c\, d^t)\, 2^d \,\exp\left((c/2)\,d^{(1-s) \eta /(s-\eta)+1}\right).
\end{eqnarray*}
It is easily checked that $(1-s) \eta /(s-\eta)+1=t$, so we obtain
\[
 \SEWT (d,s,t,c)\le \exp(-c\, d^t)\, 2^d \exp ( (c/2)\, d^t)=\exp(-(c/2)\, d^t)\, \exp(d\log 2).
\]
As $t>1$, we obtain \EXP-$(s,t)$-\WT-\ABS.

\paragraph{\textbf{Subcase (A.3.2):} $\tlambda_1> 1$}

We again define $\beta_j:=\tlambda_j / \tlambda_1$ for $j\ge 1$, and consider the information complexity with
respect to $\beta_j$ instead of $\tlambda_j$. Then, as in Case (A.1.1),
\[
n_{\ABS}^{(\boldsymbol{\lambda})} (\varepsilon,S_d)=n_{\ABS}^{(\boldsymbol{\beta})}(\varepsilon /\tlambda_1^{d/2}, S_d)
\]
Since $t>1$ and $s<1$, we have
\[d^t+\left(d\log (\tlambda_1^{1/2}) + \log (1/\varepsilon)\right)^s \approx d^t +(\log (1/\varepsilon))^s,\]
and this implies that \EXP-$(s,t)$-\WT-\ABS holds with respect to $\boldsymbol{\beta}$ if and only if it holds with respect 
to $\boldsymbol{\lambda}$. However, as $\beta_j\le 1$ for all $j\ge 1$, the result now follows from Case (A.3.1), similar to 
Case (A.1.1).

\paragraph{\textbf{(A.4):} $t\le 1$, $s>1$, $\tlambda_1\le1$, $\tlambda_2<1$ \& \eqref{rateB} $\implies $ \EXP-$(s,t)$-\WT-\ABS}

Due to the assumption that $\tlambda_1\le 1$, we again have $\tlambda_j^{-1}\ge 1$ for all $j$, such that
\begin{eqnarray*}
 \sigma_{\rm EWT} (d,s,t,c)
 &\le&
 \exp(-c\, d^t)\,
 \sum_{j_1=1}^\infty \cdots \sum_{j_d=1}^\infty
 \exp\left(-c\left[\log\left(\prod_{\ell=1}^d \frac{1}{\tlambda_{j_\ell}}\right)\right]^s\right).
\end{eqnarray*}
Note that the approach taken in Case (A.2) does not work now since $t\le 1$. 

In the following we write $[d]$ for the index set $\{1,2,\ldots,d\}$ and, for $\fraku\subseteq [d]$, 
$\overline{\fraku}=[d]\setminus\fraku$. Due to \eqref{rateB}, we can find $m\in\naturals$ such that
\begin{equation}\label{eq:choicemA4}
 \tlambda_{j}^{-1}\ge \tlambda_2^{-1} \exp\left((\log j)^{1/s}\left(2/c\right)^{1/s}\right)\quad 
\forall j\ge m+1.
\end{equation}
Now we study
\begin{eqnarray}\label{eq:ubound}
\lefteqn{\sum_{j_1=1}^\infty \cdots \sum_{j_d=1}^\infty
 \exp\left(-c\left[\log\left(\prod_{\ell=1}^d \frac{1}{\tlambda_{j_\ell}}\right)\right]^s\right)=}\nonumber\\
&=&\sum_{\substack{\fraku \subseteq [d]\\ 
\fraku=\{v_1,\ldots,v_{\abs{\fraku}}\}\\ 
\overline{\fraku}=\{w_1,\ldots,w_{d-\abs{\fraku}}\}}}
\sum_{j_{v_1}=1}^m\cdots \sum_{j_{v_{\abs{\fraku}}}=1}^m \ \ 
\sum_{j_{w_1}=m+1}^\infty \cdots \sum_{j_{w_{d-\abs{\fraku}}}=m+1}^\infty 
\exp\left(-c\left[\log\left(\prod_{\ell=1}^d \frac{1}{\tlambda_{j_\ell}}\right)\right]^s\right).\nonumber\\
\end{eqnarray}
Since $s>1$, we have, for fixed $\fraku\subseteq [d]$, 
\[
 \left[\log\left(\prod_{\ell=1}^d \frac{1}{\tlambda_{j_\ell}}\right)\right]^s\;\ge\;
 \left[\log\left(\prod_{\ell\in\fraku} \frac{1}{\tlambda_{j_\ell}}\right)\right]^s +
 \left[\log\left(\prod_{\ell\in\overline{\fraku}} \frac{1}{\tlambda_{j_\ell}}\right)\right]^s,
\]
so the expression in \eqref{eq:ubound} is bounded by 
\begin{multline*}
\sum_{\substack{\fraku \subseteq [d]\\ 
\fraku=\{v_1,\ldots,v_{\abs{\fraku}}\}\\ 
\overline{\fraku}=\{w_1,\ldots,w_{d-\abs{\fraku}}\}}}
\sum_{j_{v_1}=1}^m\cdots \sum_{j_{v_{\abs{\fraku}}}=1}^m 
\exp\left(-c
\left[\log\left(\prod_{\ell\in\fraku} \tlambda_{j_\ell}^{-1}\right)\right]^s \right) \\
\times\sum_{j_{w_1}=m+1}^\infty \cdots \sum_{j_{w_{d-\abs{\fraku}}}=m+1}^\infty 
\exp\left(-c\left[\log\left(\prod_{\ell\in\overline{\fraku}} \tlambda_{j_\ell}^{-1}\right)\right]^s\right).
\end{multline*}
We first study
\[
A_{\fraku}:=\sum_{j_{v_1}=1}^m\cdots \sum_{j_{v_{\abs{\fraku}}}=1}^m 
\exp\left(-c\left[\log\left(\prod_{\ell\in\fraku} \frac{1}{\tlambda_{j_\ell}}\right)\right]^s \right).
\]
There are a total of $m^d$ terms of the form $\prod_{\ell \in\fraku} \tlambda_{j_\ell}^{-1}$ for 
$j_{v_1}, \ldots, j_{v_{\abs{\fraku}}} \in \{1, \ldots, m\}$ 
in $A_{\fraku}$.  For $k = 0, \ldots, \abs{\fraku}$ there are $(m-1)^k\binom{\abs{\fraku}}{k}$ 
of these terms containing the factor of $\tlambda_1^{-1}$ exactly $\abs{\fraku}-k$ times.  
Such terms are bounded below by $\tlambda_2^{-k}$, so we obtain
$$
A_{\fraku} \le \sum_{k=0}^{\abs{\fraku}}\binom{\abs{\fraku}}{k}
\,(m-1)^k\,\exp\left(-c\,\bigl[\log(\tlambda_{2}^{-k})\bigr]^s\right).
$$
We bound
$\binom{\abs{\fraku}}{k}$ by $(\me\, \abs{\fraku}/k)^k$ due to Lemma \ref{binomBdLem}. Hence, we have
\begin{align*}
A_{\fraku} & \le 1 + \abs{\fraku}\,
\max_{k=1,\dots,\abs{\fraku}}\,\exp(f(k)) 
 \le 1 +  \exp\bigl(\log(\abs{\fraku}) + f(k_{\max})\bigr),
\end{align*}
where
\begin{align*}
f(k)&=k+k\log(\abs{\fraku}/k)+k\log(m-1)-ck^s\bigl[\log(\tlambda_2^{-1})\bigr]^s, \\
f'(k)&= \log(\abs{\fraku}/k)+ \log(m-1)- c s k^{s-1}\bigl[\log(\tlambda_2^{-1})\bigr] , \\
f(k_{\max})& = \max_{k\in[1,\abs{\fraku}]}f(k)  \ge \max_{k=1,\dots,\abs{\fraku}}f(k).
\end{align*}
For $\abs{\fraku}$ large enough, we have
\begin{align*}
f'(1)&= \log(\abs{\fraku}) +\log(m-1)- c s(\log(\tlambda_2^{-1}))^s > 0,\\
f'(d)&= \log(m-1)-c s \abs{\fraku}^{s-1}(\log(\tlambda_2^{-1}))^s < 0,
\end{align*}
hence,
the maximum occurs in the interior.  By setting the $f'(k)=0$, we obtain
\begin{align*}
0  & = \log(\abs{\fraku}/k_{\rm max}) + \log(m-1)-
csk_{\rm max}^{s-1}(\log(\tlambda_2^{-1}))^s, \\
f(k_{\rm max})&=k_{\rm max}+k_{\rm max}\log(\abs{\fraku}/k_{\rm max})
+k_{\rm max}\log(m-1)-ck_{\rm
        max}^{s}\left(\log(\tlambda_2^{-1})\right)^s\\
&= k_{\rm max}+c\,(s-1)\,k_{\rm
        max}^{s}\left(\log(\tlambda_2^{-1})\right)^s.
\end{align*}

The nonlinear equation defining $k_{\max}$ above implies that
\begin{align*}
k_{\rm max} &=
\mathcal{O}\left(\left(\log(\abs{\fraku})\right)^{1/(s-1)}\right),\quad\mbox{and}\quad
f(k_{\rm max})=
\mathcal{O}\left( \left(\log(\abs{\fraku})\right)^{s/(s-1)}\right).
\end{align*}
Consequently,
\[
 A_{\fraku}\le \exp\left(\log(\abs{\fraku}) + \mathcal{O}\left( \left(\log(\abs{\fraku})\right)^{s/(s-1)}\right)\right).
\]

Due to the choice of $m$ in \eqref{eq:choicemA4}, we obtain
\begin{eqnarray*}
\left[\log\left(\prod_{\ell\in\overline{\fraku}} \tlambda_{j_\ell}^{-1}\right)\right]^s
&\ge& \left[\log\left(\prod_{\ell\in\overline{\fraku}} \tlambda_{2}^{-1}\right)
 + \log\left(\prod_{\ell\in\overline{\fraku}} \exp\left((\log j_\ell)^{1/s}\left(2/c\right)^{1/s}\right)\right)\right]^s\\
&=& \left[\abs{\overline{\fraku}}\log\left(\tlambda_{2}^{-1}\right)
 + \sum_{\ell\in\overline{\fraku}} (\log j_\ell)^{1/s}\left(2/c\right)^{1/s}\right]^s\\
&\ge& \abs{\overline{\fraku}}^s \left[\log\left(\tlambda_{2}^{-1}\right)\right]^s + 
 \sum_{\ell\in\overline{\fraku}}\frac{1}{c} \log \left(j_\ell^2\right).
\end{eqnarray*}
Consequently,
\begin{eqnarray*}
B_{\fraku}&:=&\sum_{j_{w_1}=m+1}^\infty \cdots \sum_{j_{w_{d-\abs{\fraku}}}=m+1}^\infty 
\exp\left(-c\left[\log\left(\prod_{\ell\in\overline{\fraku}} \tlambda_{j_\ell}^{-1}\right)\right]^s\right)\\
&\le&\exp\left(-c \abs{\overline{\fraku}}^s \left[\log\left(\tlambda_{2}^{-1}\right)\right]^s\right)
\sum_{j_{w_1}=m+1}^\infty \cdots \sum_{j_{w_{d-\abs{\fraku}}}=m+1}^\infty
\exp\left(-c\sum_{\ell\in\overline{\fraku}}\frac{1}{c} \log \left(j_\ell^2\right)\right)\\
&=&\exp\left(-c \abs{\overline{\fraku}}^s \left[\log\left(\tlambda_{2}^{-1}\right)\right]^s\right)
\sum_{j_{w_1}=m+1}^\infty \frac{1}{j_{w_1}^{\; 2}}\cdots \sum_{j_{w_{d-\abs{\fraku}}}=m+1}^\infty\frac{1}{j_{w_d-\abs{\fraku}}^{\; 2}}\\
&\le& \exp\left(-c \abs{\overline{\fraku}}^s \left[\log\left(\tlambda_{2}^{-1}\right)\right]^s\right) 
\left(\zeta (2)\right)^{\abs{\overline{\fraku}}}
\end{eqnarray*}

In total, we obtain
\begin{eqnarray*}
\lefteqn{\sum_{j_1=1}^\infty \cdots \sum_{j_d=1}^\infty
 \exp\left(-c\left[\log\left(\prod_{\ell=1}^d \frac{1}{\tlambda_{j_\ell}}\right)\right]^s\right)\le}\\
&\le& \sum_{\fraku \subseteq [d]} 
\exp\left(-c \abs{\overline{\fraku}}^s \left[\log\left(\tlambda_{2}^{-1}\right)\right]^s+ 
\abs{\overline{\fraku}}\log (\zeta (2))+\log(\abs{\fraku}) + \mathcal{O}\left( \left(\log(\abs{\fraku})\right)^{s/(s-1)}\right)\right)\\ 
&\le& 
\exp\left(\log(d) + \mathcal{O}\left( \left(\log(d)\right)^{s/(s-1)}\right)\right)
\sum_{\overline{\fraku} \subseteq [d]} 
\exp\left(-c \abs{\overline{\fraku}}^s \left[\log\left(\tlambda_{2}^{-1}\right)\right]^s+ 
\abs{\overline{\fraku}}\log (\zeta (2))\right),
\end{eqnarray*}
where we used $\overline{\fraku}=[d]\setminus\fraku$ in the last step. 

Similarly as in the analysis of $A_{\fraku}$, we see that the sum in the latter expression is bounded by 
\[
 \exp\left(\log (d) + \mathcal{O}\left( \left(\log(d)\right)^{s/(s-1)}\right)\right).
\]
This term grows slower with $d$ than $\exp (-cd^t)$, so we obtain \EXP-$(s,t)$-\WT-\ABS, as desired. 

\paragraph{\textbf{(N.1):} $t>1, s\ge 1$ \& \eqref{rateB} $\implies $ \EXP-$(s,t)$-\WT-\NOR}

Note that in this case we have 
\[
\sigma_{\rm EWT} (d,s,t,c)= \exp(-c\, d^t)\,
 \sum_{j_1=1}^\infty \cdots \sum_{j_d=1}^\infty
 \exp\left(-c\left[\log (2\mathrm{e})+\log\left(\prod_{\ell=1}^d \frac{\tlambda_1}{\tlambda_{j_\ell}}\right)\right]^s\right),
\]
since $\tlambda_1 / \tlambda_j \ge 1$ for all $j$. The rest of the argument 
is analogous to that in Case (A.2).

\paragraph{\textbf{(N.2):}  $t>1$, $s< 1$, \& \eqref{rateC} with $\eta=s(t-1)/(t-s)$ $\implies $ \EXP-$(s,t)$-\WT-\NOR}

This case can be treated in a similar way as Case (A.3), Subcase (A.3.1).

\paragraph{\textbf{(N.3):} $t\le 1$, $s>1$, $\tlambda_1>\tlambda_2$, \& \eqref{rateB} $\implies $ \EXP-$(s,t)$-\WT-\NOR}

This case can be treated in a similar way as Case (A.4). 

\bigskip

\bigskip

Regarding all other tractability notions,
we know from above that we do not have \EXP-$(s,t)$-\WT when $t\le 1$ and $s\le 1$. Since \EXP-$(s,t)$-\WT is a weaker tractability
notion than all other tractability notions considered here, we cannot have any other stronger kind of tractability.

This completes the proof of Theorem \ref{thm:main}.

\end{proof}

\section{Acknowledgement}
The authors thank the MATRIX institute in Creswick, VIC, Australia, and its staff for supporting their 
stay during the program ``On the Frontiers of High-Dimensional Computation'' in June 2018. Furthermore, 
the authors thank the RICAM Special Semester Program 2018, during which parts of the paper were written.

\begin{small}
\noindent\textbf{Authors' addresses:}
\\ \\
\noindent Fred J. Hickernell, 
\\
Center for Interdisciplinary Scientific Computation,\\ Illinois Institute of Technology, \\
           Pritzker Science Center (LS) 106A, 3105 Dearborn Street, Chicago, IL 60616, USA\\
 \\
\noindent Peter Kritzer, 
\\
Johann Radon Institute for Computational and Applied Mathematics (RICAM),\\
Austrian Academy of Sciences,\\ Altenbergerstr.~69, 4040 Linz, Austria\\
 \\
\noindent Henryk Wo\'{z}niakowski,
\\
Department of Computer Science,\\ Columbia University,\\ New York, NY, USA\\
Institute of Applied Mathematics,\\ University of Warsaw,\\
ul. Banacha 2, 02-097 Warszawa, Poland
 \\

\noindent \textbf{E-mail:} \\
\texttt{hickernell@iit.edu} \\
\texttt{peter.kritzer@oeaw.ac.at}\\
\texttt{henryk@cs.columbia.edu}\\

\end{small}

\end{document}